\newtheorem{theorem}{Theorem}[section]
\newtheorem{proposition}[theorem]{Proposition}
\newtheorem{definition}{Definition}[section]
\theoremstyle{remark}
\newtheorem{remark}[theorem]{Remark}
\numberwithin{equation}{section}
\begin{document}


\begin{frontmatter}

%

\title{Nonlinear and Linearised Primal and Dual Initial Boundary Value Problems: When are they Bounded? How are they Connected?}

\author[sweden,southafrica]{Jan Nordstr\"{o}m}
\cortext[secondcorrespondingauthor]{Corresponding author}
\ead{jan.nordstrom@liu.se}
\address[sweden]{Department of Mathematics, Applied Mathematics, Link\"{o}ping University, SE-581 83 Link\"{o}ping, Sweden}
\address[southafrica]{Department of Mathematics and Applied Mathematics, University of Johannesburg, P.O. Box 524, Auckland Park 2006, Johannesburg, South Africa}

\begin{abstract}
Linearisation is often used as a first step in the analysis of nonlinear initial boundary value problems. 
The linearisation procedure frequently results in a confusing contradiction where the nonlinear problem conserves energy and has an energy bound but the linearised version does not (or vice versa). In this paper we attempt to resolve that contradiction and relate nonlinear energy conserving and bounded initial boundary value problems to their linearised versions and the related dual problems. We start by showing that a specific skew-symmetric form of the primal nonlinear problem leads to energy conservation and a bound. Next, we show that this specific form together with a non-standard linearisation procedure preserves these properties for the new slightly modified linearised problem. We proceed to show that the corresponding linear and nonlinear dual (or self-adjoint) problems also have bounds and conserve energy due to this specific formulation. Next, the implication of the new formulation on the choice of boundary conditions is discussed. A straightforward nonlinear and linear analysis may lead to a different number and type of boundary conditions required for an energy bound. We show that the new formulation shed some light on this contradiction. We conclude by illustrating that the new continuous formulation automatically lead to energy stable and energy conserving numerical approximations for both linear and nonlinear primal and dual problems if the approximations are formulated on summation-by-parts form.
\end{abstract}

\begin{keyword}
initial boundary value problems \sep energy estimates \sep skew-symmetric form \sep linearisation procedure \sep nonlinear problems \sep linear problems \sep dual problems \sep  self-adjoint problems \sep nonlinear self-adjointness \sep variable coefficients \sep boundary conditions \sep summation-by-parts \sep energy stability
\end{keyword}


\end{frontmatter}


\section{Introduction}

The classical way of analysing linear initial boundary value problems (IVBPs) uses the energy method \cite{kreiss1970,kreiss1989initial,Gustafsson1978,gustafsson1995time,oliger1978} as the main tool. The energy method applied to linear IBVPs lead to well posed boundary conditions and energy estimates \cite{nordstrom2020,nordstrom_roadmap,nordstrom2005,ghader2014}. This procedure can also be used directly for certain types of nonlinear problems as recently shown in \cite{nordstrom2019, nordstrom2020spatial, Lauren2021}. Another common procedure to obtain estimates for nonlinear problems is to use the entropy stability theory \cite{tadmor1984,Tadmor1987,Tadmor2003}  which originated in \cite{godunov1961interesting,volpert1967,kruzkov1970,dafermos1973entropy,lax1973,harten1983} and is applied for example in \cite{dubois1988,hindenlang2019,parsani2015entropy,svard2012,svard2021entropy}.

In this paper we will not focus on nonlinear problems per se, but rather on the relation between a nonlinear problem, and its linearised counterpart, the variable coefficient problem. This relation is interesting for many reasons, perhaps the most important one (except for being an important analysis tool) is that it constitutes the starting point for the derivation of the dual (or adjoint) problem often used in optimisation \cite{Jameson1988233,Jameson1998213,Giles2000393,Giles2002145,Nielsen20021155,Fidkowski2011673}. Although we study energy estimates for nonlinear and linearised problems which eventually lead to energy stable schemes, our analysis may also shed some light on the linear stability problems recently observed for nonlinear entropy stable schemes \cite{gassner2020stability,ranocha2021}. A nonlinear problem connects to its linearised problem through the linearisation procedure which frequently results in confusing contradictions. Given that the nonlinear problem conserves energy and has an energy bound, the resulting linearised variable coefficient version often does not (or vice versa). 
In addition to being confusing, this leads to practical problems, namely: ill-posedness of the corresponding dual problems \cite{berg2012,berg2013,berg2014,nordstrom_dual_2017,Nordstrom-Ghasemi2020}.


The observed ill-posedness of the dual problem typically manifests itself as a numerical stability problem (sometimes referred to as "butterfly effects" \cite{Thalabard2020,Lohner2014742,Wang20131,wilcox2013}) and various stabilisation techniques (often referring to the famous Lorenz 63 system \cite{Lorenz63}) have been developed, see for example \cite{Wang20131,Wang2014210,Wang2014156,Lea2000523,Eyink20041867,Thuburn200573,Blonigan2014660}. Although we will not deal specifically with this issue, we note that the acclaimed Lorenz 63 system deals with ordinary differential equations, where the stabilising effect of correctly imposed boundary conditions in IBVPs is missing. The analysis in this paper indicate that some of these stabilisation techniques may not be required, and that both the linear and nonlinear dual problems have energy bounds if properly posed.

Nonlinear dual problems have received considerable interest recently. The general concept of nonlinear self-adjointness \cite{Ibragimov2006742,NEWIbragimov2011}, which includes strict self-adjointness \cite{Ibragimov2006742,Ibragimov2007311}, quasi self-adjointness \cite{S009630031200667420121001} and weak self-adjointness \cite{edselc.2-52.0-7995823472520110701}, was originally introduced to construct conservation laws associated with certain symmetry properties of differential equations  \cite{OLDIbragimov2011,Zhang2013,edselc.2-52.0-7995823472520110701}. Nonlinear self-adjointness has also been used to derive exact transformations between linear and nonlinear formulations \cite{Tracina2015}. We will also consider nonlinear self-adjoint problems, but our focus is different. We will consider the relation between nonlinear problems and related linear ones where the linear problems are obtained using the standard linearisation procedure which neglects quadratic terms  \cite{Strang196437,kreiss1989initial,gustafsson1995time}. 

Our ambition is to derive energy conserving and bounded formulations for both the nonlinear and linearised problem, later to be transformed to energy stable numerical approximations. 
We focus on the formulation of the continuous equations, but also discuss boundary conditions. 
A nonlinear and linear analysis may lead to a different number and type of boundary conditions required for an energy bound. This was shown in \cite{nordstrom2021linear} for the nonlinear and linearised shallow water equations. There are also cases where this discrepancy does not occur, see \cite{nordstrom2019, nordstrom2020spatial, Lauren2021,Nordstrom2007874} for examples. We show that the new skew-symmetric formulation presented herein  shed some light on this issue.

Using lifting approaches \cite{Arnold20011749,nordstrom_roadmap} and proper continuous boundary conditions, it is straightforward to apply the results (formulations that lead to energy conservation and bounds) from the continuous analysis and develop stable numerical schemes. This procedure enables research groups using different numerical techniques  such as finite difference \cite{nordstrom2009stable,svard2007stable,svard2008stable}, finite volume \cite{nordstrom2012weak,nordstrom2003finite}, spectral elements \cite{carpenter2014entropy,carpenter1996spectral}, flux reconstruction \cite{castonguay2013energy,huynh2007flux,huynh2007flux}, discontinuous Galerkin \cite{gassner2013skew,hesthaven1996stable,kopriva2021} and continuous Galerkin schemes \cite{abgrall2020analysis,abgrall2021analysis} to make use of the results. The only requirement is that one can formulate the numerical procedure on summation-by-parts (SBP) form with weak boundary conditions on simultaneous approximation term (SAT) form \cite{svard2014review,fernandez2014review} or equivalently through numerical flux functions \cite{kopriva2021}. We conclude the paper by exemplifying how both linear and nonlinear stability follows  almost  automatically using the SBP-SAT technique with proper boundary conditions. 

The paper is organised as follows: We start by illustrating the confusing contradiction mentioned above for the scalar Burgers' equation in Section~\ref{sec:burgers}. This sets the stage for the discussion in Section~\ref{sec:energyStab} regarding which form a general, provably bounded nonlinear or linear hyperbolic IBVP should have. By exploiting a particular skew-symmetric form, we show how to relate the linearised problem to the nonlinear one in Section~\ref{sec:newlinear}. Section~\ref{sec:dual} presents the nonlinear and linear dual problems and show that they also have energy bounds and preserve energy. In Section~\ref{sec:examples} we return to Burgers' equation and  show how the initial contradiction discussed in Section~\ref{sec:burgers} is resolved. In addition we discuss more complex examples involving systems of IBVPs. In Section~\ref{boundary_conditions}, we discuss the relation between a linear and nonlinear boundary treatment. Section~\ref{numerics} illustrate the close relation between the skew-symmetric continuous formulation and stability and energy conservation of the numerical scheme. A summary and conclusions are provided in Section~\ref{sec:conclusion}.

\section{The standard linearisation procedure}\label{sec:burgers}

We start by considering the one-dimensional (1D) hyperbolic (inviscid) Burgers' equation. The primitive, conservative and skew-symmetric forms  are 
\begin{equation}\label{eq:burgers}
u_t + u u_x=0,  \quad u_t +  \left( \frac{u^2}{2}  \right)_x=0,  \quad u_t + \frac{1}{3}(u u_x +(u^2)_x) =0
\end{equation}
respectively. Here $u(x,t)$ is the solution where $x \in  \Omega=\lbrack a,b \rbrack$ and $t \geq 0 $ are the spatial and time coordinates. We also require that no discontinuities  are present (which could be accomplished by adding suitable dissipative terms).  With a smooth solution, all formulations in (\ref{eq:burgers}) are mathematically equivalent.

\subsection{Energy bound in the nonlinear Burgers' equation}\label{sec:burgersstandlin}

By applying the energy method (multiplying the equation by the solution and integrating over the domain), the formulations in (\ref{eq:burgers}) all lead to
\begin{equation}\label{eq:burgersenergy}
\frac{d}{dt}\|u\|^2_2 + \frac{2}{3} u^3|^b_a =0,
\end{equation}
i.e. the energy $\|u\|^2_2= \int^b_a u^2 dx$ grows or decays only through boundary effects, which can be controlled by suitable boundary conditions. Note in particular that no volume terms contribute to the growth or decay.

\subsection{Growth in the linearised Burgers' equation}\label{sec:burgersstandlin_sec}

Next we proceed in the standard way and make the ansatz $u=\bar u (x,t) +  u^\prime (x,t)$ where $\bar u$  is $\cal{O}$(1) while $u^\prime$ is a small pertubation ($ |u^\prime | \ll |\bar u|$). By inserting the ansatz into (\ref{eq:burgers})  we find
\begin{equation}\label{eq:burgersnonlin}
\bar u_t + \bar u \bar u_x+u^\prime_t+\bar u u^\prime_x +  \bar u_x u^\prime+u^\prime u^\prime_x =0.
\end{equation}
In the standard linearisation procedure one cancels the last quadratic term due to it being negligible, and assumes \cite{Strang196437,kreiss1989initial,gustafsson1995time} that
\begin{equation}\label{eq:governeananddist}
    \bar u_t + \bar u \bar u_x=0 \quad \text{and} \quad u^\prime_t+\bar u u^\prime_x +  \bar u_x u^\prime=0
\end{equation}
holds.  

This procedure leads to two confusing results: the first equation in (\ref{eq:governeananddist}), which states that $\bar u$ satisfies the original governing equation (\ref{eq:burgers}) can, strictly speaking, only hold if $u^\prime = 0$ (disregarding the trivial case where  $\bar u$ is constant and assuming uniqueness). Applying the energy method to the second equation yields
\begin{equation}\label{eq:burgersenergydist}
\frac{d}{dt}\|u^\prime \|^2_2 +  \bar u (u^\prime)^2|^b_a =-\int^b_a \bar u_x (u^\prime)^2 dx \leq  \max_{a  \leq  x  \leq  b}\lvert \bar u_x \rvert  \|u^\prime \|^2_2.
\end{equation}
The volume term on the right-hand side may cause  exponential growth (or decay) of energy, even if suitable boundary conditions are supplied. The variable coefficient $\bar u_x$ produces the volume term. No such effect exist in the nonlinear problem. This initial example illustrates that even though the nonlinear problem has an energy bound and conserves energy as in (\ref{eq:burgersenergy}), the linearised problem may not, as seen in (\ref{eq:burgersenergydist}).
\begin{remark}
It is straightforward to show that the second equation in (\ref{eq:governeananddist}) also leads to growth in the dual
problem. The growth (or decay) in these linear problems remains even if the zero order term is dropped (it only leads to a sign change of the volume term in (\ref{eq:burgersenergydist})). Only the trivial case $\bar u_x=0$ has no volume growth.
\end{remark}

\section{Energy bounded and energy conserving linear and nonlinear primal problems}\label{sec:energyStab}

The results above for the Burgers' equation show that a general formulation that allows for an energy estimate for both the nonlinear and linearised equations would be of interest. Throughout this paper we restrict the analysis to the hyperbolic (inviscid) part of IBVPs, where the nonlinearity normally resides. The parabolic (viscous) part could be added on and properly posed provide dissipation or damping effects (although they can complicate the boundary treatment \cite{kreiss1989initial,Gustafsson1978,nordstrom2020,nordstrom2005,nordstrom2019,nordstrom2020spatial,Lauren2021}).

Consider the following general hyperbolic IBVP
\begin{equation}\label{eq:nonlin}
P U_t + (A_i(V) U)_{x_i}+B_i(V)U_{x_i}+C(V)U=0,  \quad t \geq 0,  \quad  \vec x=(x_1,x_2,..,x_k) \in \Omega
\end{equation}
augmented with homogeneous boundary conditions $L_p U=0$ at the boundary $\delta \Omega$. In (\ref{eq:nonlin}),  the Einstein summation convention is used and $P$ is a symmetric positive definite (or semi-definite) time-independent matrix that defines an energy norm (or semi-norm) $\|U\|^2_P= \int_{\Omega} U^T P U d\Omega$. We assume that $U$ and $V$ are smooth. The $n \times n$ matrices $A_i,B_i,C$ are smooth functions (each matrix element is smooth) of the $n$ component long vector $V$, but otherwise arbitrary. Note that (\ref{eq:nonlin}) encapsulates both linear ($V \neq U$) and nonlinear  ($V=U$) problems. The following two concepts are essential for a proper treatment of (\ref{eq:nonlin}).
\begin{definition}
The problem (\ref{eq:nonlin}) is energy conserving if $\|U\|^2_P= \int_{\Omega} U^T P U d\Omega$ only changes due to boundary effects. It is energy bounded if $\|U\|^2_P < \infty$ as $t  \rightarrow  \infty$.
\end{definition}
\begin{proposition}\label{lemma:Matrixrelation}
The IBVP  (\ref{eq:nonlin}) for linear ($V \neq U$) and nonlinear  ($V=U$)  is energy conserving if
\begin{equation}\label{eq:boundcond}
B_i= A_i^T, \quad i=1,2,..,k \quad \text{and } \quad C+C^T = 0
\end{equation}
holds. It is energy bounded if it is energy conserving and the boundary conditions $L_p U=0$ are such that 
\begin{equation}\label{1Dprimalstab}
\oint\limits_{\partial\Omega}U^T  (n_i A_i)   \\\ U \\\ ds = \oint\limits_{\partial\Omega} \frac{1}{2} U^T ((n_i A_i)  +(n_i A_i )^T) U \\\ ds \geq 0.
\end{equation}
\end{proposition}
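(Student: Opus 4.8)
The plan is to apply the energy method to \eqref{eq:nonlin}, treating the linear ($V\neq U$) and nonlinear ($V=U$) cases in one stroke, since the argument uses only pointwise algebraic properties of the coefficient matrices. First I would multiply \eqref{eq:nonlin} from the left by $U^T$ and integrate over $\Omega$. Because $P$ is symmetric and time-independent, $U^TPU_t=\tfrac12\partial_t(U^TPU)$, so the first term integrates to $\tfrac12\tfrac{d}{dt}\|U\|_P^2$. Because $C+C^T=0$ by the second relation in \eqref{eq:boundcond}, the scalar $U^TCU$ vanishes identically, so the zeroth-order term contributes nothing.

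The core of the argument is the pair of first-order terms $\int_\Omega U^T(A_iU)_{x_i}\,d\Omega+\int_\Omega U^TB_iU_{x_i}\,d\Omega$. Integrating the first by parts, keeping $A_iU$ intact as the differentiated factor (so that derivatives of $A_i(V)$ need not be written out), yields a boundary term $\oint_{\partial\Omega}n_iU^TA_iU\,ds$ and a volume remainder $-\int_\Omega (U_{x_i})^TA_iU\,d\Omega$. Transposing the scalar integrand gives $(U_{x_i})^TA_iU=U^TA_i^TU_{x_i}$, and invoking $B_i=A_i^T$ from \eqref{eq:boundcond} this remainder cancels exactly against $\int_\Omega U^TB_iU_{x_i}\,d\Omega$. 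The energy method therefore collapses to
\[
\frac{d}{dt}\|U\|_P^2=-2\oint_{\partial\Omega}n_iU^TA_iU\,ds=-\oint_{\partial\Omega}U^T\big((n_iA_i)+(n_iA_i)^T\big)U\,ds,
\]
the last equality using that a scalar quadratic form equals its symmetrisation. Since only boundary data appear on the right, $\|U\|_P^2$ changes solely through boundary effects, which is energy conservation in the sense of the Definition, for both the linear and the nonlinear problem. For the boundedness claim I would then note that if the homogeneous boundary conditions $L_pU=0$ are imposed so that \eqref{1Dprimalstab} holds, the right-hand side above is non-positive, hence $\|U\|_P^2$ is non-increasing in $t$ and bounded by $\|U\|_P^2|_{t=0}<\infty$; this gives $\|U\|_P^2<\infty$ as $t\to\infty$ (and remains valid when $P$ is only semi-definite, with $\|\cdot\|_P$ a semi-norm).

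I do not expect a substantive obstacle; the estimate is clean. The points needing care are: (i) performing the integration by parts on $(A_iU)_{x_i}$ as a whole rather than splitting off $(A_i)_{x_i}U$, which is precisely what makes an $A_i^T$ volume term appear in the form that annihilates $B_iU_{x_i}$; (ii) stating explicitly that nothing in the argument distinguishes $V=U$ from $V\neq U$, since $A_i,B_i,C$ enter only through their pointwise values and the transposition identities, so the single computation covers the nonlinear and the variable-coefficient linear case simultaneously; and (iii) tracking the smoothness hypotheses on $U$ and $V$ that license the integration by parts and the identity $U^TCU=0$.
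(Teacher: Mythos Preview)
Your proposal is correct and follows essentially the same route as the paper: multiply \eqref{eq:nonlin} by $U^T$, integrate, use integration by parts on the $(A_iU)_{x_i}$ term to produce the boundary integral and a volume remainder that cancels against the $B_iU_{x_i}$ term via $B_i=A_i^T$, and kill the zero-order contribution via $C+C^T=0$. The paper's proof is merely a compressed version of what you wrote; your care points (i)--(iii) are useful elaborations but do not constitute a different argument.
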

\begin{proof}
The energy method applied to (\ref{eq:nonlin}) yields
\begin{equation}\label{eq:boundaryPart1}
\frac{1}{2} \frac{d}{dt}\|U\|^2_P + \oint\limits_{\partial\Omega}U^T  (n_i A_i)  \\\ U \\\ ds= \int\limits_{\Omega}(U_{x_i}^T  A_i U - U^T B_i U_{x_i}) \\\ d \Omega -\int\limits_{\Omega} U^T  C U \\\ d \Omega,
\end{equation}
where $(n_1,..,n_k)^T$ is the outward pointing unit normal. The right-hand side of (\ref{eq:boundaryPart1}) is cancelled by (\ref{eq:boundcond}) leading to energy conservation. If in addition (\ref{1Dprimalstab}) holds, an energy bound is obtained.
\end{proof}
\begin{remark}
The relation (\ref{eq:boundcond})  leads a to skew-symmetric formulation, where symmetric matrices \cite{Abarbanel19811, nordstrom_roadmap,oliger1978,nordstrom2005} are allowed, but not required for the energy method to work. Proposition \ref{lemma:Matrixrelation}  shows that whatever form the original IBVP has, energy boundedness and energy conservation can be proved if it can be rewritten in the form given by (\ref{eq:nonlin})-(\ref{eq:boundcond}). 
\end{remark}


For later reference we introduce the  linear ($V \neq U$) and nonlinear ($V=U$) primal problem to be analysed
\begin{equation}\label{1Dprimal}
\begin{aligned}
P U_t + (A_i(V) U)_{x_i}+A_i^T(V)U_{x_i}+C(V)U=\	&F,& \vec x&\in \Omega,& t&\geq 0\\
L_p U=\	&g,& \vec x&\in \partial\Omega,& t&\geq 0\\
U =\	&f,&\vec x&\in\Omega,& t&= 0,
\end{aligned}
\end{equation}
where $F$ is a forcing function, $g$ is boundary data and $f$ initial data.
\begin{remark}
The skew-symmetric form in (\ref{1Dprimal}) leads to a complete derivative in the energy method since
\[
U^T (A_i U)_{x_i}+U^T A_i^T U_{x_i}=U^T (A_i U)_{x_i}+(A_i U)^TU_{x_i}=U^T (A_i U)_{x_i}+U_{x_i}^T(A_i U)=(U^T A_i  U)_{x_i}.
\]
\end{remark}
\begin{remark}
Inhomogeneous boundary conditions $L_p U=g$ will be discussed in Section~\ref{boundary_conditions}. 
\end{remark}

\section{A new non-standard linearisation procedure}\label{sec:newlinear}
%
In the first step, we divide the solution into a non-constant mean value $\bar U$ and small perturbation $U^\prime$  as $V=U=\bar U +  U^\prime$ and insert this into the homogeneous version of (\ref{1Dprimal}). 
Next we introduce perturbed matrices $A_i(\bar U +  U^\prime)=\bar A_i(\bar U) +  A^\prime_i(\bar U,U^\prime)$, where $A^\prime_i$ vanishes as $U^\prime$ vanishes and similary for $C(\bar U +  U^\prime)=\bar C(\bar U) +  C^\prime(\bar U,U^\prime)$. 
 This partition can always be done, since  the matrix entries are smooth. The result is
\begin{equation}\label{eq:nonlin1d}
P \bar U_t + P U^\prime_t +(\bar A_i \bar U + \bar A_i  U^\prime + A^\prime_i \bar U)_{x_i}+\bar A^T_i \bar U_{x_i}+\bar A^T_i U^\prime_{x_i} + (A^\prime_i)^T \bar U_{x_i}+\bar C \bar U + \bar C U^\prime + C^\prime \bar U+H=0,
\end{equation}
where $H=\lbrack (A^\prime_i U^\prime)_{x_i} + (A^\prime_i)^T U^\prime_{x_i} + C^\prime U^\prime \rbrack =\mathcal{O}(|U^\prime|^2)$ collects the nonlinear terms.  
The next step is interpreted in slightly different ways by different authors  \cite{Strang196437,kreiss1989initial,gustafsson1995time} as pointed out above. In the standard version one neglects $H$ and requires that
\begin{equation}\label{eq:linearisedtilde}
P \bar U_t + (\bar A_i  \bar U)_{x_i}+\bar A ^T_i \bar U_{x_i}+\bar C \bar U=0
\end{equation}
holds. However, this interpretation is doubtful since $\bar U$ can only solve (\ref{eq:linearisedtilde}) if $U^\prime$ vanishes as seen in (\ref{eq:nonlin1d}). 

By carefully considering Proposition \ref{lemma:Matrixrelation} we instead group the terms in the following way:
\begin{equation}\label{eq:linearised1d}
\underbrace{P \bar U_t + ((\bar A_i + A^\prime_i) \bar U)_{x_i}+(\bar A_i + A^\prime_i)^T \bar U_{x_i}+(\bar C +  C^\prime) \bar U}_{mean} + \underbrace{P U^\prime_t +(\bar A_i  U^\prime)_{x_i}+\bar A^T_i U^\prime_{x_i}+\bar C U^\prime}_{perturbation}+H=0.
\end{equation}
A more plausible interpretation is thus obtained by neglecting $H$ and demanding that
\begin{equation}\label{eq:linearisedtildenonhom_v1}
P \bar U_t + ((\bar A_i + A^\prime_i) \bar U)_{x_i}+(\bar A_i + A^\prime_i)^T \bar U_{x_i}+(\bar C +  C^\prime) \bar U=0,
\end{equation}
or equivalently 
\begin{equation}\label{eq:linearisedtildenonhom}
P \bar U_t + (A_i \bar U)_{x_i}+A_i ^T \bar U_{x_i}+C \bar U=0,
\end{equation}
which leaves the linearised primal equation to be 
\begin{equation}\label{eq:linearisedfinal}
P U^\prime_t +(\bar A_i U^\prime)_{x_i}+\bar A^T_i U^\prime_{x_i}+\bar C U^\prime=0.
\end{equation}
Equations (\ref{eq:linearisedtildenonhom}) and (\ref{eq:linearisedfinal}) are now both in the skew-symmetric form required in Proposition \ref{lemma:Matrixrelation} for energy boundedness. This formulation removes the common confusing growth issue of the linearised problem, when the nonlinear original problem (\ref{1Dprimal}) has a bound. Note also that (\ref{eq:linearisedtildenonhom}) and (\ref{eq:linearisedfinal}) are coupled.

\section{Energy bounded and energy conserving linear and nonlinear dual problems}\label{sec:dual}
Consider the functional
\begin{equation*}
J(U,G)=\int_{\Omega}U^TG\	d\Omega,
\end{equation*}
where $G$ is a vector weight function. To derive the dual problem to (\ref{1Dprimal}), we search for the dual solution $\Phi$ such that $J(U,G)=J(\Phi,F)$, where $F$ is the forcing function in (\ref{1Dprimal}).  Integration by parts yields
\begin{equation*}
\begin{aligned}
\int_{0}^{T}J(U,G)dt&=\int_{0}^{T}J(U,G) dt-\int_{0}^{T}(\Phi,P U_t + (A_i(V) U)_{x_i}+A^T_i(V) U_{x_i}+C(V)U-F)\	dt\\&=
\int_{0}^{T}J(\Phi,F)\	dt-(\Phi,PU)\bigg|_{0}^{T}-\int_{0}^{T}\oint_{\partial\Omega}\Phi^T (n_i A_i) U\	dsdt\\&+\int_{0}^{T}(P \Phi_t+(A_i\Phi)_{x_i}+A^T_i \Phi_{x_i}+C \Phi+G,U)\	dt.
\end{aligned}
\end{equation*}
With homogeneous initial condition for the primal problem, it follows that the dual end condition is $\Phi(\vec x,T)=0$. Furthermore, the dual homogeneous boundary conditions $L_d\Phi=0$  must be such that  $\Phi^T (n_i A_i)  U=0, \vec x \in \partial\Omega$ when the primal homogeneous boundary conditions are applied.
Finally, the dual equation becomes
\begin{equation}\label{define_self}
-P \Phi_t-(A_i(V)\Phi)_{x_i}-A(V)^T_i \Phi_{x_i}-C(V) \Phi=G.
\end{equation}\par
The derivation of the dual problem (\ref{define_self}) holds both in the linear ($V \neq U$) and nonlinear  ($V = U$) cases. By setting $G=0$ and letting $V = \Phi$ we find 
that the dual problem is identical to the primal problem (\ref{1Dprimal}) with $F=0$, and hence it is nonlinearly self-adjoint, or more precisely strictly self-adjoint \cite{Ibragimov2006742,Ibragimov2007311,NEWIbragimov2011}. 
\begin{remark}
The concept of nonlinear self-adjointness was originally introduced to aid construction of conservation laws associated with certain symmetry properties of differential equations. 
One may speculate (no proof exists) that many conservation laws could possibly be transformed to the skew-symmetric form in Proposition \ref{lemma:Matrixrelation} and proven energy bounded. This provides interesting future research as many practical problems are formulated as conservation laws, but lack a proof of boundedness. 
\end{remark}


By introducing the time transformation $\tau=T-t$, both the linear ($V \neq \Phi $) and nonlinear ($V=\Phi $) dual problems become
\begin{equation}\label{1Ddual}
\begin{aligned}
P \Phi_\tau-(A_i(V)\Phi)_{x_i}-A(V)^T_i \Phi_{x_i}-C(V) \Phi=\	&G,& \vec  x&\in \Omega,& \tau&\geq 0\\
L_d\Phi=\	&q,& \vec  x&\in \partial\Omega,& \tau&\geq 0\\
\Phi=\	&r,&\vec  x&\in\Omega,& \tau&= 0
\end{aligned}
\end{equation}
where $G$ is the forcing function, $q$ is boundary data and $r$ initial data.

We state the corresponding results for the dual problem that hold for the primal one. 
\begin{proposition}\label{lemma:Matrixrelation_dual}
The linear ($V \neq \Phi$) and nonlinear ($V=\Phi$) version of the IBVP (\ref{1Ddual}) is energy conserving if $C+C^T = 0$ holds. It is energy bounded if it is energy conserving and the boundary conditions $L_d\Phi=0$ are such that
\begin{equation}\label{1Ddualstab}
\oint_{\partial\Omega}\Phi^T (n_i A_i)  \Phi ds = \oint_{\partial\Omega}  \frac{1}{2} \Phi^T ((n_i A_i) +(n_i A_i) ^T) \Phi ds \leq 0.
\end{equation}
\end{proposition}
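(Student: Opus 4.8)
The plan is to mirror the proof of Proposition \ref{lemma:Matrixrelation} almost verbatim, since the dual problem (\ref{1Ddual}) after the time transformation $\tau = T-t$ has exactly the same structure as the primal problem, up to the signs in front of the spatial operator. First I would take the homogeneous version of (\ref{1Ddual}) (that is, set $G=0$, $q=0$), multiply by $\Phi^T$ from the left, integrate over $\Omega$, and add the transpose of the resulting scalar identity to symmetrise. This produces
\[
\frac{1}{2}\frac{d}{dt}\|\Phi\|^2_P - \oint_{\partial\Omega}\Phi^T (n_i A_i)\,\Phi\,ds = -\int_\Omega\bigl(\Phi_{x_i}^T A_i \Phi - \Phi^T A_i^T \Phi_{x_i}\bigr)\,d\Omega + \int_\Omega \Phi^T C\,\Phi\,d\Omega,
\]
where the sign flips relative to (\ref{eq:boundaryPart1}) come from the minus signs multiplying $(A_i\Phi)_{x_i}$, $A_i^T\Phi_{x_i}$ and $C\Phi$ in (\ref{1Ddual}). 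The term $\Phi_{x_i}^T A_i \Phi - \Phi^T A_i^T \Phi_{x_i}$ vanishes identically because $\Phi^T A_i^T \Phi_{x_i} = (A_i\Phi)^T \Phi_{x_i} = \Phi_{x_i}^T(A_i\Phi)$ — this is precisely the skew-symmetric-form identity already recorded in the remark following (\ref{1Dprimal}), and crucially it requires no structural assumption beyond the $A_i, A_i^T$ pairing which is built into (\ref{1Ddual}). This is why the dual proposition needs only $C+C^T=0$ and not a separate condition analogous to $B_i = A_i^T$.

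Next I would impose $C + C^T = 0$, which kills the volume term $\int_\Omega \Phi^T C\,\Phi\,d\Omega = \frac{1}{2}\int_\Omega \Phi^T(C+C^T)\Phi\,d\Omega = 0$, leaving
\[
\frac{1}{2}\frac{d}{dt}\|\Phi\|^2_P = \oint_{\partial\Omega}\Phi^T (n_i A_i)\,\Phi\,ds.
\]
Since the right-hand side involves only boundary data, the energy $\|\Phi\|_P^2$ changes only through boundary effects, which is the definition of energy conservation. Finally, using the algebraic identity $\Phi^T(n_iA_i)\Phi = \frac{1}{2}\Phi^T\bigl((n_iA_i)+(n_iA_i)^T\bigr)\Phi$ (valid for any scalar quadratic form), the hypothesis (\ref{1Ddualstab}) states exactly that this boundary integral is $\le 0$ whenever $L_d\Phi = 0$ holds. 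Hence $\frac{d}{dt}\|\Phi\|_P^2 \le 0$, so $\|\Phi\|_P^2$ is nonincreasing in $\tau$, bounded by $\|r\|_P^2$, and therefore remains finite as $\tau\to\infty$, giving the energy bound.

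I do not expect a genuine obstacle here; the argument is a sign-adjusted copy of the primal case. The one point that deserves a sentence of care is the orientation of the inequality: because the dual equation in $\tau$-form carries the spatial operator with the opposite sign to the primal one, the "good" sign for the boundary term reverses — hence (\ref{1Ddualstab}) asks for $\le 0$ where (\ref{1Dprimalstab}) asked for $\ge 0$. I would also note explicitly that the derivation is insensitive to whether $A_i$ and $C$ are evaluated at $V\neq\Phi$ (linear) or at $V=\Phi$ (nonlinear), since at no stage do we differentiate the coefficients or use any relation between $V$ and $\Phi$; the energy identity treats $A_i(V)$ and $C(V)$ as given smooth matrix fields. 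This makes the statement cover both the linear and nonlinear dual problems simultaneously, exactly as claimed.
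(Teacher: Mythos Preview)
Your proposal is correct and follows exactly the approach the paper intends: its proof reads in full ``The proof is identical to the one for Proposition \ref{lemma:Matrixrelation},'' and you have simply written out that identical argument with the appropriate sign flips. The only cosmetic slip is that you write $\frac{d}{dt}$ where the dual problem (\ref{1Ddual}) is posed in $\tau$, so it should be $\frac{d}{d\tau}$ throughout.
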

\begin{proof}
The proof is identical to the one for Proposition (\ref{lemma:Matrixrelation}).
\end{proof}



\section{Examples}\label{sec:examples}
We will discuss four examples of increasing complexity. We start by again considering Burgers' equation. 
\subsection{The 1D scalar Burgers' equation}\label{Burgersex}

Any of the formulations in (\ref{eq:burgers}) can be used to arrive at the skew-symmetric (or self-adjoint) forms below, but perhaps starting with the confusing formulation in 
(\ref{eq:burgersnonlin}) is the most instructive way. We have 
\begin{equation}\label{eq:burgersnonlinmodif}
\bar u_t + \bar u \bar u_x+u^\prime_t+\bar u u^\prime_x +  \bar u_x u^\prime+h=0,
\end{equation}
where $h=u^\prime u^\prime_x $. By collecting terms based on the formulation (\ref{eq:linearised1d}) we find
\begin{equation}\label{skew-sym-burgers}
\underbrace{\bar u_t + \frac{1}{3}((\bar u + u^\prime) \bar u)_x+\frac{1}{3}(\bar u + u^\prime) \bar u_x }_{mean} + \underbrace{u^\prime_t +\frac{1}{3}(\bar u  u^\prime)_x+\frac{1}{3}\bar u u^\prime_x}_{perturbation}+h=0.
\end{equation}
After neglecting the nonlinear term $h$ we set both groups to zero and obtain the equations for the mean and the perturbation. This leads to skew-symmetric self-adjoint formulations for both the mean and the perturbation and the preceding theory in Sections \ref{sec:energyStab}, \ref{sec:newlinear} and \ref{sec:dual} applies.

\subsection{The 2D incompressible Euler equations}\label{Eulerex}

The incompressible 2D Euler equations in primitive form with velocity field $(u,v)$ in the $(x,y)$ direction and pressure $p$ (divided by the constant density) are
\begin{align}
u_t + u u_x + v u_y + p_x &= 0,\nonumber\\
v_t + u v_x + v v_y + p_y &= 0,\label{NS1}\\
u_x + v_y &=0\nonumber. 
\end{align}
In a more compact formulation on matrix-vector form \cite{nordstrom2019, nordstrom2020spatial, Lauren2021} we have
\begin{align}
\tilde I U_t + A U_x + B U_y
= 0 \label{NS}
\end{align}
where $U=(u,v,p)^T$ and
\begin{equation}
\tilde I =
\begin{bmatrix}
1 & 0 & 0 \\
0 & 1 & 0 \\
0 & 0 & 0
\end{bmatrix},
\quad
A=
\begin{bmatrix}
u & 0 & 1 \\
0 & u & 0 \\
1 & 0 & 0
\end{bmatrix},
\quad
B=
\begin{bmatrix}
v & 0 & 0 \\
0 & v & 1 \\
0 & 1 & 0
\end{bmatrix}.
\label{ABI}
\end{equation}
To arrive at the proper skew-symmetric form, we rewrite (\ref{NS}) using the splitting technique  \cite{nordstrom2006conservative} as
\begin{equation}
\begin{array}{ll}
A U_x = \dfrac{1}{2}  \lbrack (A U)_x + A U_x - A_x U  \rbrack ,
\quad
B U_y = \dfrac{1}{2} \lbrack  (B U)_y + B U_y - B_y U  \rbrack.
\end{array}
\label{splitting_terms}
\end{equation}
By inserting (\ref{splitting_terms}) into (\ref{NS}) 
and recalling that we are dealing with an incompressible fluid, i.e., 
$
A_x + B_y = (u_x + v_y)  \tilde I =0,
$
we obtain the final form of the incompressible 2D Euler equations
\begin{equation}
\tilde I U_t 
+ \frac{1}{2}\left[ (A U)_x + A U_x 
+  (B U)_y + B U_y \right]
= 0.
\label{NS_splitting}
\end{equation} 
To connect with the general formulation (\ref{1Dprimal}): $P=\tilde I,  A_1=A/2,  A_2=B/2, C=0$. Since the matrices $A,B$ are already symmetric in the original formulation, the formulation (\ref{NS_splitting}) is in the proper skew-symmetric form, and the preceding theory in Sections \ref{sec:energyStab}, \ref{sec:newlinear} and \ref{sec:dual} can be applied. \begin{remark}
We obtain an estimate in the semi-norm  $\|U\|^2_{\tilde I}= \int_{\Omega} U^T \tilde I  U dxdy$ involving only the velocities.
\end{remark}
\begin{remark}
The divergence relation can also be used to formulate (\ref{NS}) in strictly conservative form.
\end{remark}

\subsection{The 3D incompressible Euler equations in cylindrical coordinates}\label{Euler3Dex}
A slightly more complicated example is given by the 3D incompressible Euler equations in cylindrical coordinates \cite{landau2006}
\begin{equation*}
   \begin{aligned}
   u_t + u \frac{\partial u}{\partial r}
       + \frac{v}{r} \frac{\partial u}{\partial \theta}
       + w \frac{\partial u}{\partial z} 
       + \frac{\partial p}{\partial r} 
        - \frac{v^{2}}{r}& = 0 
   \\
   v_t + u \frac{\partial v}{\partial r}
       + \frac{v}{r} \frac{\partial v}{\partial \theta}
       + w \frac{\partial v}{\partial z} 
       + \frac{1}{r}\frac{\partial p}{\partial \theta} 
        + \frac{uv}{r} & = 0 
   \\
   w_t + u \frac{\partial w}{\partial r}
       + \frac{v}{r} \frac{\partial w}{\partial \theta}
       + w \frac{\partial w}{\partial z} 
       + \frac{\partial p}{\partial z} & = 0 
   \\
       \frac{1}{r} \frac{\partial (ru)}{\partial r}
       + \frac{1}{r} \frac{\partial v}{\partial \theta}
       + \frac{\partial w}{\partial z} & = 0,
   \end{aligned}
\end{equation*}
or in a compact (almost) conservative formulation with subscripts denoting differentiation
\begin{equation}
\label{eq:cylinder_euler}
\begin{aligned}
   \tilde{I} U_t + 
   \frac{1}{r}
   \left[
      \left(r A U \right)_r +
   \left(B U \right)_\theta +
   \left(r C U \right)_z +
   R
   \right]
   = 0
	\, .
\end{aligned}
\end{equation}
In \eqref{eq:cylinder_euler}, $\tilde{I} = \text{diag}(1,1,1,0)$, $U = (u,v,w,p)^\top$ is the solution vector, where $u,v,w$ are the velocities in the $r$, $\theta$, $z$ direction, respectively, and $p$ is the pressure (divided by the constant density). Furthermore, we have
\begin{equation*}
 	A = \begin{pmatrix}
         u & 0 & 0 & 1
	      \\
         0 & u & 0 & 0
         \\
         0 & 0 & u & 0
	      \\
         1 & 0 & 0 & 0
      \end{pmatrix},
\quad
 	B = \begin{pmatrix}
         v & 0 & 0 & 0
	      \\
         0 & v & 0 & 1
         \\
         0 & 0 & v & 0
	      \\
         0 & 1 & 0 & 0
      \end{pmatrix},
\quad
 	C = \begin{pmatrix}
         w & 0 & 0 & 0
	      \\
         0 & w & 0 & 0
         \\
         0 & 0 & w & 1
	      \\
         0 & 0 & 1 & 0
      \end{pmatrix},
\quad
   R= \begin{pmatrix}
         -v^2-p  \\
         uv    \\
         0       \\
         0
      \end{pmatrix}\, .
\end{equation*}

Striving to use Propostion \ref{lemma:Matrixrelation}, we need the skew-symmetric form of \eqref{eq:cylinder_euler}. Again using the splitting technique  \cite{nordstrom2006conservative} we rewrite the conservative derivative formulation using the generic formula
\[
   (E U)_x = \frac{1}{2}\left[(EU)_x + E_x U + E U_x \right], \quad x= r, \theta, z \quad E = r A, B, r C
\]
and insert them into \eqref{eq:cylinder_euler}. The final result is
\begin{equation}
\label{eq:cylinder_euler_compact}
\begin{aligned}
  \tilde{I} U_t +   \frac{1}{2r}\left[ (r A U)_r + (rA) U_r +
                     (B U)_\theta + B U_\theta +
                     (rC U)_z + (rC) U_z +2 D U \right]=0,
\end{aligned}
\end{equation}
where the derivatives on the matrices and $R(U)$ are grouped together to form
\begin{equation}
\label{zeroorder}
  \frac{1}{2}((rA)_r + B_\theta + (rC)_z)U + R
   = D U, \quad \text{where}  \quad 
   D =
   \begin{pmatrix}
      0   & - v  & 0 & -1/2 \\
      v   & 0    & 0 &  0 \\
      0   & 0    & 0 &  0 \\
      1/2 & 0    & 0 &  0 
   \end{pmatrix}.
\end{equation}
To connect with the general formulation (\ref{1Dprimal}): multiply (\ref{eq:cylinder_euler_compact}) with $r$ and obtain $P=r \tilde I,  A_1=r A/2 , A_2=B/2, A_3=rC/2$ while the zero order term $C$ in (\ref{1Dprimal}) corresponds to $D$ in (\ref{zeroorder}).
Since the matrices in (\ref{eq:cylinder_euler_compact}) are symmetric (except $D$ which is skew-symmetric as required), Proposition \ref{lemma:Matrixrelation} applies.
\begin{remark}
The estimate is obtained in a semi-norm, in this case  $\|U\|^2_{\tilde I}= \int_{\Omega} U^T \tilde I  U r dr d\theta dz$. 
\end{remark}

\subsection{The 2D shallow water equations}\label{SWEex_2D}

The 2D shallow water equations (SWE) in primitive form \cite{whitham1974} excluding the influence of bottom topography are
\begin{equation}\label{eq:swNoncons_2D}
\begin{aligned}
    \phi_t + u\phi_x + v\phi_y + \phi (u_x + v_y) &=0,\\[0.1cm]
    u_t + u u_x + v u_y + \phi_x - f v &=0,\\[0.1cm]
    v_t + u v_x + v v_y + \phi_y + f u &=0,
\end{aligned}
\end{equation}
where $\phi = g h$ \cite{oliger1978} is the geopontential,  $h$ is the water height, $g$ is the gravitational constant and $u$ is the fluid velocity.  The Coriolis forces are included with the function $f$ which is typically a function of latitude \cite{shallowwaterbook,whitham1974}. Note that $h>0$ and $\phi>0$ from physical considerations. 
 
In both Burgers' equation and the two forms of incompressible Euler equations discussed above, transformation of variables was not required in order to obtain the proper skew-symmetric form. However, for the SWE this must be done. The total energy $\epsilon$ is the sum of the kinetic and potential energy. By multiplying the total energy with the gravitational constant $g$ we find
\begin{equation}\label{eq:totEng}
g \epsilon = \frac{1}{2}\left(\phi (u^2+v^2)+\phi^2\right).
\end{equation}
This is an auxiliary conserved positive quantity for smooth solutions of \eqref{eq:swNoncons_2D} from which we choose our new variables $U=(U_1,U_2,U_3)^T=(\phi, \sqrt{\phi} u, \sqrt{\phi} v))^T$ (each component now have the same physical units). To get an energy estimate we need governing equations for $U$. Repeated use of the chain rule in (\ref{eq:swNoncons_2D}) yields
\begin{equation}\label{eq:swNoncons_new_2D}
    U_t +  \mathcal{A} U_x + \mathcal{B} U_y+ \mathcal{C} U= 0,
\end{equation}
where
\begin{equation}\label{eq:swNoncons_new_matrix_A}
    \mathcal{A} = \begin{bmatrix}
     \frac{1}{2} u                                     & \sqrt{\phi}     & 0  \\
     \sqrt{\phi} -  \frac{u^2}{4 \sqrt{\phi}} & \frac{3}{2} u & 0  \\
                      - \frac{u v}{4 \sqrt{\phi}}  & \frac{1}{2} u & u 
       \end{bmatrix}=
       \begin{bmatrix}
     \frac{1}{2} \frac{U_2}{\sqrt{U_1}}                       & \sqrt{U_1}                                     & 0                                 \\
     \sqrt{U_1} -  \frac{U_2^2}{4 (\sqrt{U_1})^3 }     & \frac{3}{2} \frac{U_2}{\sqrt{U_1}} & 0                                  \\
                      -  \frac{U_2 U_3 }{4 (\sqrt{U_1})^3} & \frac{1}{2} \frac{U_3}{\sqrt{U_1}} & \frac{U_2}{\sqrt{U_1}}  
       \end{bmatrix}, 
\end{equation}       
\begin{equation}\label{eq:swNoncons_new_matrix_B}       
  \mathcal{B} = \begin{bmatrix}
     \frac{1}{2} v                                         & 0  & \sqrt{\phi}      \\
     - \frac{u v}{4 \sqrt{\phi}}                      &  v  &  \frac{1}{2} v  \\
       \sqrt{\phi} -  \frac{u^2}{4 \sqrt{\phi}}  & 0   & \frac{3}{2} v
       \end{bmatrix}=
       \begin{bmatrix}
     \frac{1}{2} \frac{U_3}{\sqrt{U_1}}                       & 0                                     &  \sqrt{U_1}                    \\
      -  \frac{U_2 U_3 }{4 (\sqrt{U_1})^3 }    & \frac{U_3}{\sqrt{U_1}} &  \frac{1}{2} \frac{U_2}{\sqrt{U_1}}                                \\
       \sqrt{U_1} -  \frac{U_3^2}{4 (\sqrt{U_1})^3} &0 & \frac{3}{2}  \frac{U_3}{\sqrt{U_1}}  \\
       \end{bmatrix},  \quad
  \mathcal{C} = \begin{bmatrix}
     0 & 0  & 0      \\
     0 & 0  &  -f    \\
     0 & +f & 0
       \end{bmatrix}.
\end{equation}

Equation (\ref{eq:swNoncons_new_2D}) is not in the skew-symmetric form (\ref{1Dprimal}) required for an estimate. To derive the skew-symmetric form, we first note that the different coordinate directions can be treated separately. We start with the $x$-direction and make the ansatz 
\begin{equation}\label{eq:swNoncons_new_matrix_ansatz_2D}
\mathcal{A} U_x =(A_1 U)_x+A_1^T U_x \quad \text{where}  \quad
A_1 = \begin{bmatrix}
     \alpha  u                 & \beta \sqrt{\phi}     & 0  \\
     \gamma \sqrt{\phi} & \theta u                  & 0  \\
     0                            & 0                            & \psi u  
       \end{bmatrix}=
       \begin{bmatrix}
      \alpha \frac{U_2}{\sqrt{U_1}}                       & \beta  \sqrt{U_1}                             & 0                                 \\
      \gamma \sqrt{U_1}                                      & \theta \frac{U_2}{\sqrt{U_1}}            & 0                                  \\
      0                                                                  & 0                                                      &  \psi \frac{U_2}{\sqrt{U_1}}  
       \end{bmatrix}, 
   \end{equation}
and  $ \alpha,\beta,\gamma,\theta,\psi$ are parameters to be determined. Equality in (\ref{eq:swNoncons_new_matrix_ansatz_2D}) leads to the one-parameter solution 
\begin{equation}\label{eq:swNoncons_new_matrix_ansatz_sol_A}
A_1 = \begin{bmatrix}
     \alpha u                 & (1-3 \alpha) \sqrt{\phi}  & 0 \\
     2\alpha  \sqrt{\phi} & \frac{1}{2} u                 &  0  \\
     0                            & 0                                  &\frac{1}{2} u  
       \end{bmatrix}=
       \begin{bmatrix}
     \alpha \frac{U_2}{\sqrt{U_1}}  & (1-3 \alpha) \sqrt{U_1}                  & 0 \\
     2\alpha  \sqrt{U_1}                 & \frac{1}{2}  \frac{U_2}{\sqrt{U_1}} &  0  \\
     0                                            & 0                                                    &\frac{1}{2} \frac{U_2}{\sqrt{U_1}}
       \end{bmatrix}.
\end{equation}
\begin{remark}
\label{Tomas}
The 1D skew-symmetric version of (\ref{eq:swNoncons_new_matrix_ansatz_sol_A}) was first derived in \cite{tomas}.
\end{remark}
Based on the result (\ref{eq:swNoncons_new_matrix_ansatz_sol_A})  in the $x$-direction and the structure of the original matrices (\ref{eq:swNoncons_new_matrix_A}) and (\ref{eq:swNoncons_new_matrix_B}) we make a guess (and verify) that the matrix $A_2$ in the $y$-direction is
\begin{equation}\label{eq:swNoncons_new_matrix_ansatz_sol_B}
A_2 = \begin{bmatrix}
     \beta v                 & 0 & (1-3 \beta) \sqrt{\phi}  \\
      0 & \frac{1}{2} v               &  0  \\
     2\beta \sqrt{\phi}                  & 0                                  &\frac{1}{2} v 
       \end{bmatrix}=
       \begin{bmatrix}
     \beta \frac{U_3}{\sqrt{U_1}}  & 0               &  (1-3 \beta) \sqrt{U_1}    \\
     0              & \frac{1}{2}  \frac{U_3}{\sqrt{U_1}} &  0  \\
     2\beta  \sqrt{U_1}                                         & 0                                                    &\frac{1}{2} \frac{U_3}{\sqrt{U_1}}
       \end{bmatrix}.
\end{equation}
The skew-symmetric matrix $\mathcal{C}$  now denoted $C$ in (\ref{eq:swNoncons_new_matrix_B}) remain the same after the transformation.

The 2D SWE are now transformed to
\begin{equation}\label{eq:swNoncons_new_skew}
    U_t +  (A_1 U)_x + A_1^T U_x+ (A_2 U)_y + A_2^T U_y+CU= 0,
\end{equation}
which has the required skew-symmetric form in Proposition \ref{lemma:Matrixrelation}. The rest of the theory presented in Sections \ref{sec:energyStab}, \ref{sec:newlinear} and \ref{sec:dual}  follows provided that the correct number and type of boundary conditions are given.
\begin{remark}
\label{rem:noalpha_2d}
The  energy rate cannot depend on the free parameters $\alpha$ and $\beta$ in the matrices  $A$ and $B$ since they are not present in (\ref{eq:swNoncons_new_2D}), (\ref{eq:swNoncons_new_matrix_A}) and (\ref{eq:swNoncons_new_matrix_B}). Hence as a sanity check we compute the boundary contraction
\begin{equation}\label{boundarmatrix_2D}
U^T (n_i A_i)U = 
U^T
       \begin{bmatrix}
      \frac{\alpha+\beta}{2} u_n                      &  \frac{1-\alpha}{2} n_1 \sqrt{U_1} &   \frac{1-\beta}{2} n_2 \sqrt{U_1}     \\
     \frac{1-\alpha}{2} n_1 \sqrt{U_1} & \frac{1}{2} u_n                             &    0                                                   \\
     \frac{1-\beta}{2} n_2 \sqrt{U_1}   & 0                                                  &\frac{1}{2} u_n
       \end{bmatrix}
U=u_n (U_1^2+\frac{1}{2} (U_2^2+U_3^2)),
\end{equation}
where for compactness we  introduced the normal velocity $u_n=n_1u+n_2v$. The dependency on the free parameters $\alpha$ and $\beta$ vanishes. Foregoing the analysis on boundary conditions in the next section, we remark that these parameters in the boundary contraction do not vanish in the linearised case.
\end{remark}
\begin{remark}
\label{compare_previous}
The estimate (\ref{eq:boundaryPart1}) with the same energy norm $\|U\|^2_P$,  the same boundary term (\ref{boundarmatrix_2D}) and cancelled volume terms was also obtained in  \cite{nordstrom2021linear}. In that case the original equations (\ref{eq:swNoncons_2D}), the variables $U=(\phi, u, v)$ and the norm matrix $P=diag(1, \phi, \phi)$ was used. The time and space dependent norm matrix was obtained from a symmetrisation requirement, which the formulation (\ref{eq:swNoncons_new_skew}) bypasses (such that $P=I_3$ can be used). This simplifies the production of a stable discrete approximation as discussed in Section \ref{numerics}.
\end{remark}

\section{Analysis and discussion of boundary conditions for nonlinear and linearised problems}\label{boundary_conditions}
A nonlinear and linear analysis may lead to a different number and type of boundary conditions required for an energy bound. This was shown in \cite{nordstrom2021linear} where the boundary coefficient matrix $n_i A_i$ were found to be distinctly different in the linear and nonlinear case. There are also cases where this discrepancy do not occur, see \cite{nordstrom2019, nordstrom2020spatial, Lauren2021,Nordstrom2007874} for examples. We will discuss this issue in light of the examples and novel formulations discussed above.

Comparing the nonlinear skew-symmetric problem with the linearised one, i.e.
\begin{equation*}
P U_t + (A_i(U) U)_{x_i}+A^T_i(U)U_{x_i}+C(U) U=0 \quad \text{with }  \quad P U^\prime_t +(A_i(\bar U) U^\prime)_{x_i}+A^T_i(\bar U) U^\prime_{x_i}+C(\bar U) U^\prime=0
\end{equation*}
we find that they differ only in the arguments of the matrices. There are no terms missing, and hence the combined boundary coefficient matrix $n_iA_i$ has the same structure in both cases. This suggests that the same procedure for deriving boundary conditions can be used in both the nonlinear and linearised case. In other words, the discrepancy when it comes to boundary conditions seen in \cite{nordstrom2021linear} seems to vanish with the new linearisation formulation. This is, in fact, also the situation for the Burgers' equation and the two forms incompressible Euler equations discussed above.

However, when considering the 2D SWE, the situation becomes more complicated due to the presence of the free parameters $\alpha$ and $\beta$. As shown in Remark \ref{rem:noalpha_2d}, the dependence of $\alpha$ and $\beta $ in the nonlinear boundary term $U^T (n_iA_i(U))  U$ is an illusion and cancel after contraction. However, the linear case is different because two sets of different variables $\bar U$ and $U^\prime$ are involved, and the boundary term
$(U^\prime)^T (n_iA_i(\bar U)) U^\prime$ retain the dependence on $\alpha$ and $\beta$. From (\ref{boundarmatrix_2D}) we found the parameter independent nonlinear boundary term to be
\begin{equation}
\label{eq:nonlin_compare}
U^T (n_iA_i(U)) U=u_n (U_1^2+\frac{1}{2} (U_2^2+U_3^2))= 
U^T
       \begin{bmatrix}
     u_n                      &  0                                                 &  0                      \\
     0                          & \frac{1}{2} u_n                             &    0                    \\
     0                          & 0                                                   &\frac{1}{2} u_n
      \end{bmatrix}
U
\end{equation}
while the parameter dependent linearised version is
\begin{equation}
\label{eq:lin_compare}
(U^\prime)^T (n_iA_i(\bar U)) U^\prime = 
(U^\prime)^T
       \begin{bmatrix}
     \frac{\alpha+\beta}{2}  \bar u_n                      &  \frac{1-\alpha}{2} n_1 \sqrt{\bar U_1} &   \frac{1-\beta}{2} n_2 \sqrt{\bar U_1}     \\
     \frac{1-\alpha}{2} n_1 \sqrt{\bar U_1} & \frac{1}{2} \bar u_n                             &    0                                                   \\
     \frac{1-\beta}{2} n_2 \sqrt{\bar U_1}   & 0                                                  &\frac{1}{2} \bar u_n
       \end{bmatrix}
U^\prime.
\end{equation}
The formulation  (\ref{eq:lin_compare}) holds for all $\alpha, \beta$. 

To shed some light on the difference between the linear and nonlinear boundary treatment, consider the special choice $\alpha=\beta=1$ which produces a boundary term similar to the nonlinear one 
\begin{equation}
\label{eq:lin_compare_0}
(U^\prime)^T (n_iA_i(\bar U)) U^\prime =
(U^\prime)^T
       \begin{bmatrix}
     \bar u_n                      &  0                                                 &  0                      \\
     0                          & \frac{1}{2} \bar u_n                             &    0                    \\
     0                          & 0                                                   &\frac{1}{2} \bar u_n
      \end{bmatrix}
U^\prime.
\end{equation}
Let us now compare (\ref{eq:nonlin_compare}) and (\ref{eq:lin_compare_0}). For inflow, when both $u_n$  and $\bar u_n$ are negative, three boundary conditions seem to be needed in both cases while at outflow when both $\bar u_n$ and $u_n$ are positive, none is required. 

However, there is a significant difference between (\ref{eq:nonlin_compare}) and (\ref{eq:lin_compare_0}) since the diagonal entries in (\ref{eq:nonlin_compare}) are functions of the solution 
($u_n=(n_1 U_2 + n_2 U_3)/\sqrt{U_1} $) while they can be considered as external data in the linear case as $\bar u_n$ is independent of $U^\prime$. 
Following \cite{nordstrom2021linear} we aim for a minimal number of inflow boundary conditions and rewrite the nonlinear boundary term as
\begin{equation}
\label{eq:nonlin_compare_v1}
U^T (n_iA_i(U)) U=\frac{1}{2 U_n \sqrt{U_1}}
\begin{bmatrix}
U_1^2                  \\
U_n^2 + U_1^2  \\
U_n U_{\tau}
\end{bmatrix}^T
\begin{bmatrix}
     -1                   &  0                                                 &  0                      \\
     0                          & 1                            &    0                    \\
     0                          & 0                                                   & 1
 \end{bmatrix}
 \begin{bmatrix}
U_1^2                  \\
U_n^2 + U_1^2  \\
U_n U_{\tau}
\end{bmatrix},
\end{equation}
where we introduced the normal $U_n=n_1 U_2 + n_2 U_3=u_n \sqrt{\phi} $ and tangential $U_{\tau}=-n_2 U_2 + n_1 U_3=u_{\tau}  \sqrt{\phi} $ scaled velocities. Considering non-glancing boundaries we have  $\min{\lvert U_n \rvert}  \geq \delta_n >0$  and since $\phi>0$ we also have $\min{\lvert \sqrt{U_1} \rvert}  \geq \delta_1 >0$. This leads to a bound using only two boundary conditions (recall that $U_n < 0$) instead of three. By specifying $U_n^2 + U_1^2 = g_2^2$ and $U_n U_{\tau}=g_3^2$ where $g_2, g_3$  are bounded functions, we get
\begin{equation}\label{eq:nonlinest}
\frac{1}{2} \frac{d}{dt}\|U\|^2 =- \oint\limits_{\partial\Omega} U^T (n_iA_i(U)) U ds   \leq \oint\limits_{\partial\Omega}  \frac{-U_1^4 + g_2^4 + g_3^4}{2 \min{\lvert U_n \rvert} \min{\lvert \sqrt{U_1} \rvert} }ds <  \infty. 
\end{equation}
The reformulation of the boundary term in the nonlinear case from (\ref{eq:nonlin_compare}) to (\ref{eq:nonlin_compare_v1}) was possible only because the diagonal entries in the boundary matrix were functions of the solution. In the linearised case, no such reformulation can be done. 
\begin{remark}
In the nonlinear outflow case, the rewritten boundary term in (\ref{eq:nonlin_compare_v1}) indicates that one boundary condition is required. However, this is an illusion. Aiming for a minimal number, the relation (\ref{eq:nonlin_compare}) shows that no boundary condition is required, precisely as in the linear case.
\end{remark}
\begin{remark}
An energy estimate can always be obtained by specifying too many boundary conditions \cite{nordstrom_roadmap,nordstrom2020}. But, in general overprescribing the boundary conditions means that existence cannot be obtained for linear problems. The precise existence requirement is not known for nonlinear problems, but it is reasonable to assume (unless proven otherwise) that similar requirements as for linear problems hold.
\end{remark}
\begin{remark}
\label{rem:obs}
The free parameters $\alpha$ and $\beta$ in the 2D SWE appeared since we had to derive new evolution equations for the transformed variables forming the energy. New equations were not required for Burgers' equation and the incompressible Euler equations since they already had the required form in the original variables. The number of boundary conditions for the linear 2D SWE depend upon the parameters $\alpha$ and $\beta$ which may or may not match the number required in the nonlinear case. One may speculate that nonlinear equations written on skew-symmetric form with the original variables have the same number and form of boundary conditions as the linearised version. However, if a transformation is required, caution must be taken since free parameters may appear (implicitly or explicitly), rendering the linear analysis unreliable for the nonlinear problem (or vice versa). This situation may have bearing on the recent results in \cite{gassner2020stability,ranocha2021}.
\end{remark}

\section{A stable and energy conserving numerical approximation}\label{numerics}
To exemplify the straightforward construction of stable and energy conserving linear and nonlinear schemes based on the skew-symmetric formulation in Proposition \ref{lemma:Matrixrelation}, we consider a summation-by-parts (SBP) approximation of the following general 2D problem
\begin{equation}\label{2DGEN_Cont}
P U_t+(A_1 U)_x+A_1^T U_x + (A_2 U)_y+A_2^T U_y=0.
\end{equation}
The boundary conditions (imposed through SAT terms or numerical flux functions) are assumed to be dissipative and ignored (we focus on the energy conserving properties of the numerical scheme). Equation (\ref{2DGEN_Cont}) is semi-discretised in space using summation-by-parts operators as
\begin{equation}\label{SWE_Disc}
{\bf P}  \vec U_t+{\bf D_x} ({\bf A_1}  \vec U)+{\bf A_1^T} {\bf D_x}  (\vec U) +{\bf D_y} ({\bf A_2}  \vec U)+{\bf A_2^T} {\bf D_y}  (\vec U)=0,
\end{equation}
where ${\bf P} = P \otimes I_x   \otimes I_y$ and $\vec U=(\vec U_1^T, \vec U_2^T,...,\vec U_n^T)^T$ include approximations of  $U=(U_1,U_2,...,U_n)^T$ in each node.  The matrix elements of ${\bf A_1},{\bf A_2}$ are matrices with node values of the matrix elements in $A_1,A_2$ injected on the diagonal as exemplified below
\begin{equation}
\label{illustration}
A_1 =
\begin{pmatrix}
      a_{11}   &  \ldots  & a_{1n} \\
       \vdots   & \ddots & \vdots \\
       a_{n1} &  \ldots  & a_{nn}
\end{pmatrix}, \quad
{\bf A_1} =
\begin{pmatrix}
      {\bf a_{11}}   &  \ldots  &  {\bf a_{1n}}  \\
       \vdots          & \ddots &  \vdots           \\
        {\bf a_{n1}} &  \ldots &  {\bf a_{nn}} 
\end{pmatrix}, \quad
{\bf a_{ij}} =diag(a_{ij}(x_1,y_1), \ldots, a_{ij}(x_N,y_M)).
\end{equation}

Moreover ${\bf D_x}=I_n \otimes D_{x} \otimes I_y$ and ${\bf D_y}=I_n \otimes I_x \otimes D_y$ where $D_{x,y}=P^{-1}_{x,y}Q_{x,y}$ are 1D SBP difference operators, $P_{x,y}$ are positive definite diagonal quadrature matrices, $Q_{x,y}$ satisfies the SBP constraint $Q_{x,y}+Q_{x,y}^T=B_{x,y}=diag[-1,0,...,0,1]$, $\otimes$ denotes the Kronecker product  and $I$ with subscripts denote identity matrices. All matrices have appropriate sizes such that the matrix-matrix and matrix-vector operations are defined. 
Based on the 1D SBP operators, the 2D SBP relations mimicking multi-dimensional integration by parts are given by 
\begin{equation}\label{Multi-SBP}
\vec U^T  \tilde {\bf P} {\bf D_x} \vec V= -({\bf D_x} \vec U)^T \tilde {\bf P} \vec V + \vec U^T {\bf B_x} \vec V, \quad 
\vec U^T \tilde {\bf P} {\bf D_y} \vec V= -({\bf D_y} \vec U)^T  \tilde {\bf P} \vec V + \vec U^T {\bf B_y} \vec V,
\end{equation}
where  $\vec U^T {\bf B_x} \vec V$ and $\vec U^T {\bf B_y} \vec V$ contain numerical integration along rectangular domain boundaries. In (\ref{Multi-SBP}) we have used
$ \tilde {\bf P}=I_n  \otimes P_x   \otimes P_y$, ${\bf B_x}=(I_n \otimes B_x \otimes P_y)$ and ${\bf B_y}=(I_n \otimes P_x \otimes B_y)$. 

The discrete energy method (multiply (\ref{SWE_Disc}) from the left with  $\vec U^T  \tilde {\bf P}$) yields 
\begin{equation}\label{SWE_Disc_energy}
 \vec U^T ({\bf P} \tilde {\bf P}) \vec U_t+ \vec U^T  \tilde {\bf P} {\bf D_x} ({\bf A_1}  \vec U)+ \vec U^T \tilde {\bf P} {\bf A_1^T}{\bf D_x}  (\vec U) + 
                                                              \vec U^T  \tilde {\bf P} {\bf D_y} ({\bf A_2}  \vec U)+ \vec U^T \tilde {\bf P} {\bf A_2^T} {\bf D_y}  (\vec U) =0.
\end{equation}
Next we introduce the notation  $\| \vec U\|_{{\bf P} \tilde {\bf P}}^2=\vec U^T ({\bf P} \tilde {\bf P}) \vec U$, apply the SBP relations (\ref{Multi-SBP}) and arrive at
 \begin{equation}\label{Disc_energy_final}
 \begin{aligned}
\frac{1}{2} \dfrac{d}{dt} \| \vec U\|_{{\bf P}  \tilde {\bf P}}^2 + \vec U^T {\bf B_x} ({\bf A_1}  \vec U) + \vec U^T {\bf B_y} ({\bf A_2}  \vec U)
&=({\bf D_x}  \vec U)^T \tilde {\bf P}( {\bf A_1}\vec U)-\vec U^T \tilde {\bf P} {\bf A_1^T}({\bf D_x}  \vec U) \\
&+({\bf D_y}  \vec U)^T \tilde {\bf P} ({\bf A_2}\vec U)-\vec U^T \tilde {\bf P} {\bf A_2^T} ({\bf D_x}  \vec U)
\end{aligned}
\end{equation}
which mimics (\ref{eq:boundaryPart1}) perfectly (ignoring the zero order term). By using the fact that  $\tilde {\bf P} {\bf A_1}={\bf A_1} \tilde {\bf P}$ and  $\tilde {\bf P} {\bf A_2}={\bf A_2}\tilde {\bf P}$ (since the matrices involved consist of diagonal blocks), the right hand side of (\ref{Disc_energy_final}) vanishes. Hence, the energy rate depends only on boundary terms and the scheme is energy conserving in the semi-discrete setting. 
\begin{remark}
An energy bound and stability can be obtained by adding proper weak dissipative boundary conditions using the SAT technique or numerical flux functions.
\end{remark}
\begin{remark}
It is irrelevant whether the matrices $\bf A_1$ and $\bf A_2$ are functions of the solution or not, i.e., if the problem is linear or nonlinear. The skew-symmetric formulation, a summation-by-parts discretisation and a proper weak boundary treatment are all that matters for stability and energy conservation.
\end{remark}
\begin{remark}
The energy conservation for the linear and nonlinear primal problem illustrated above, holds also for the corresponding linear and nonlinear dual problems.
\end{remark}

\section{Summary and conclusions}\label{sec:conclusion}
The standard linearisation procedure often results in a confusing contradiction where the nonlinear problem conserves energy and has an energy bound but the linearised variable coefficient version does not.  We have shown that a specific skew-symmetric form of the primal nonlinear problem leads to an energy bound and energy conservation. Next, it was shown that this skew-symmetric form together with a non-standard linearisation procedure lead to an energy bounded and energy conserving formulation also of the new slightly modified linearised variable coefficient problem. We also showed that the corresponding linear and nonlinear dual (or self-adjoint) problems retain these properties due to this specific formulation. 

The scalar Burgers' equation, the incompressible 2D Euler equations, the incompressible 3D Euler equations in cylindrical coordinates and the 2D shallow water equations were analysed to illustrate the new theory. From a study of these examples, we tentatively found that nonlinear equations on skew-symmetric form in the original variables have the same number and form of boundary conditions as the linearised version. However, if a variable transformation was required, caution must be taken, and the linear analysis may not be trustworthy. We concluded the paper by connecting the continuous analysis to a semi-discrete approximation. We showed that the skew-symmetric formulation automatically produced energy conserving and energy stable and numerical schemes for both linear and nonlinear primal and corresponding dual problems if these are formulated on summation-by-parts form. 

The main finding in this paper can be summarised as follows. To obtain a formulation leading to an energy bound and energy conservation for general nonlinear hyperbolic IBVP, a number of actions are in general required. Firstly, one must choose dependent variables that forms an appropriate energy norm. Secondly, evolution equations for the new variables on skew-symmetric form must be derived.  
Finally, one must derive boundary conditions that limit the boundary terms. Some of the steps mentioned above can often be bypassed. In the examples treated, neither the Burgers' nor the incompressible Euler equations needed new variables and the skew-symmetric form followed directly. For a general nonlinear IBVP, choosing a set of transformation variables to produce a skew-symmetric formulation is likely the most difficult task.
\section*{Acknowledgments}

Many thanks to my colleagues Fredrik Laur{\'e}n, Tomas Lundquist and Andrew R. Winters for helpful and productive comments on the manuscript. Jan Nordstr\"{o}m was supported by Vetenskapsr{\aa}det, Sweden [award no.~2018-05084 VR] and the Swedish e-Science Research Center (SeRC).

\bibliographystyle{elsarticle-num}
\bibliography{References_andrew,References_Fredrik}

\begin{thebibliography}{10}
\expandafter\ifx\csname url\endcsname\relax
  \def\url#1{\texttt{#1}}\fi
\expandafter\ifx\csname urlprefix\endcsname\relax\def\urlprefix{URL }\fi
\expandafter\ifx\csname href\endcsname\relax
  \def\href#1#2{#2} \def\path#1{#1}\fi

\bibitem{kreiss1970}
H.-O. Kreiss, Initial boundary value problems for hyperbolic systems, Commun.
  Pur. Appl. Math. 23~(3) (1970) 277--298.

\bibitem{kreiss1989initial}
H.-O. Kreiss, J.~Lorenz, Initial-boundary value problems and the
  {N}avier-{S}tokes equations, Vol.~47, SIAM, 1989.

\bibitem{Gustafsson1978}
B.~Gustafsson, A.~Sundstrom, Incompletely parabolic problems in fluid dynamics,
  SIAM J. Appl. Math. 35~(2) (1978) 343--357.

\bibitem{gustafsson1995time}
B.~Gustafsson, H.-O. Kreiss, J.~Oliger, Time dependent problems and difference
  methods, Vol.~24, JWS, 1995.

\bibitem{oliger1978}
J.~Oliger, A.~Sundstr{\"o}m, Theoretical and practical aspects of some initial
  boundary value problems in fluid dynamics, SIAM J. Appl. Math. 35~(3) (1978)
  419--446.

\bibitem{nordstrom2020}
J.~Nordstr\"{o}m, T.~M. Hagstrom, The number of boundary conditions for initial
  boundary value problems, SIAM Journal on Numerical Analysis 58~(5) (2020)
  2818--2828.

\bibitem{nordstrom_roadmap}
J.~Nordstr\"{o}m, A roadmap to well posed and stable problems in computational
  physics, {J. Sci. Comput.} 71~(1) (2017) 365--385.

\bibitem{nordstrom2005}
J.~Nordstr\"{o}m, M.~Sv\"{a}rd, Well posed boundary conditions for the
  {N}avier--{S}tokes equations, SIAM J. Numer. Anal. 43 (2005) 1231--1255.

\bibitem{ghader2014}
S.~Ghader, J.~Nordstr\"{o}m, Revisiting well-posed boundary conditions for the
  shallow water equations, Dynam. Atmos. Oceans 66 (2014) 1--9.

\bibitem{nordstrom2019}
J.~Nordstr\"{o}m, C.~L. Cognata, Energy stable boundary conditions for the
  nonlinear incompressible {N}avier--{S}tokes equations, Math. Comput. 88~(316)
  (2019) 665--690.

\bibitem{nordstrom2020spatial}
J.~Nordstr{\"o}m, F.~Laur{\'e}n, The spatial operator in the incompressible
  {N}avier--{S}tokes, {O}seen and {S}tokes equations, Computer Methods in
  Applied Mechanics and Engineering 363 (2020) 112857.

\bibitem{Lauren2021}
F.~Lauren, J.~Nordstr{\"o}m, Spectral properties of the incompressible
  {N}avier--{S}tokes equations, Journal of Computational Physics 429 (2021)
  110019.

\bibitem{tadmor1984}
E.~Tadmor, Skew-selfadjoint form for systems of conservation laws, J. Math.
  Anal. Appl. 103~(2) (1984) 428--442.

\bibitem{Tadmor1987}
E.~Tadmor, The numerical viscosity of entropy stable schemes for systems of
  conservation laws, Math. Comput. 49~(179) (1987) 91--103.

\bibitem{Tadmor2003}
E.~Tadmor, Entropy stability theory for difference approximations of nonlinear
  conservation laws and related time-dependent problems, Acta Numer. 12 (2003)
  451--512.

\bibitem{godunov1961interesting}
S.~K. Godunov, An interesting class of quasilinear systems, in: Dokl. Acad.
  Nauk SSSR, Vol.~11, 1961, pp. 521--523.

\bibitem{volpert1967}
A.~I. Vol'pert, The space {BV} and quasilinear equations, Math. USSR SB+ 10
  (1967) 257--267.

\bibitem{kruzkov1970}
S.~N. Kru\v{z}kov, First order quasilinear equations in several independent
  variables, Math. USSR SB+ 10~(2) (1970) 127--243.

\bibitem{dafermos1973entropy}
C.~M. Dafermos, The entropy rate admissibility criterion for solutions of
  hyperbolic conservation laws, J. Differ. Equations 14~(2) (1973) 202--212.

\bibitem{lax1973}
P.~D. Lax, Hyperbolic systems of conservation laws and the mathematical theory
  of shock waves, in: CBMS Regional Conference Series in Applied Mathematics,
  Vol.~11, SIAM, 1973.

\bibitem{harten1983}
A.~Harten, On the symmetric form of systems of conservation laws with entropy,
  J. Comput. Phys. 49 (1983) 151--164.

\bibitem{dubois1988}
F.~Dubois, P.~LeFloch, Boundary conditions for nonlinear hyperbolic systems of
  conservation laws, Journal of Differential Equations 71~(1) (1988) 93--122.

\bibitem{hindenlang2019}
F.~Hindenlang, G.~Gassner, D.~Kopriva, Stability of wall boundary condition
  procedures for discontinuous {G}alerkin spectral element approximations of
  the compressible {E}uler equations, Lecture Notes in Computational Science
  and Engineering 134 (2020) 3--19.

\bibitem{parsani2015entropy}
M.~Parsani, M.~H. Carpenter, E.~J. Nielsen, Entropy stable wall boundary
  conditions for the three-dimensional compressible {N}avier--{S}tokes
  equations, J. Comput. Phys. 292 (2015) 88--113.

\bibitem{svard2012}
M.~Sv{\"a}rd, S.~Mishra, Entropy stable schemes for initial-boundary-value
  conservation laws, Z. Angew. Math. Phys. 63~(6) (2012) 985--1003.

\bibitem{svard2021entropy}
M.~Sv{\"a}rd, Entropy stable boundary conditions for the {E}uler equations,
  Journal of Computational Physics 426 (2021) 109947.

\bibitem{Jameson1988233}
A.~Jameson, Aerodynamic design via control theory, Journal of Scientific
  Computing 3~(3) (1988) 233--260.

\bibitem{Jameson1998213}
A.~Jameson, L.~Martinelli, N.~Pierce, Optimum aerodynamic design using the
  {N}avier-{S}tokes equations, Theoretical and Computational Fluid Dynamics
  10~(1-4) (1998) 213--237.

\bibitem{Giles2000393}
M.~Giles, N.~Pierce, An introduction to the adjoint approach to design, Flow,
  Turbulence and Combustion 65~(3-4) (2000) 393--415.

\bibitem{Giles2002145}
M.~Giles, E.~S{\"u}li, Adjoint methods for {PDE}s: A posteriori error analysis
  and postprocessing by duality, Acta Numerica 11 (2002) 145--236.

\bibitem{Nielsen20021155}
E.~Nielsen, W.~Anderson, Recent improvements in aerodynamic design optimization
  on unstructured meshes, AIAA Journal 40~(6) (2002) 1155--1163.

\bibitem{Fidkowski2011673}
K.~Fidkowski, D.~Darmofal, Review of output-based error estimation and mesh
  adaptation in computational fluid dynamics, AIAA Journal 49~(4) (2011)
  673--694.

\bibitem{gassner2020stability}
G.~J. Gassner, M.~Sv{\"a}rd, F.~J. Hindenlang, Stability issues of
  entropy-stable and/or split-form high-order schemes (2020).
\newblock \href {http://arxiv.org/abs/2007.09026} {\path{arXiv:2007.09026}}.

\bibitem{ranocha2021}
H.~Ranocha, G.~J. Gassner, Preventing pressure oscillations does not fix local
  linear stability issues of entropy-based split-form high-order schemes,
  Communications on Applied Mathematics and Computation (2021) 1--24.

\bibitem{berg2012}
J.~Berg, J.~Nordstr{\"o}m, Superconvergent functional output for time-dependent
  problems using finite differences on summation-by-parts form, Journal of
  Computational Physics 231~(20) (2012) 6846--6860.

\bibitem{berg2013}
J.~Berg, J.~Nordstr{\"o}m, On the impact of boundary conditions on dual
  consistent finite difference discretizations, Journal of Computational
  Physics 236~(1) (2013) 41--55.

\bibitem{berg2014}
J.~Berg, J.~Nordstr{\"o}m, Duality based boundary conditions and dual
  consistent finite difference discretizations of the {N}avier-{S}tokes and
  {E}uler equations, Journal of Computational Physics 259 (2014) 135--153.

\bibitem{nordstrom_dual_2017}
J.~Nordstr{\"o}m, F.~Ghasemi, On the relation between conservation and dual
  consistency for summation-by-parts schemes, Journal of Computational Physics
  344 (2017) 437--439.

\bibitem{Nordstrom-Ghasemi2020}
J.~Nordstr{\"o}m, F.~Ghasemi, The relation between primal and dual boundary
  conditions for hyperbolic systems of equations, Journal of Computational
  Physics 401 (2020).

\bibitem{Thalabard2020}
S.~Thalabard, J.~Bec, A.~Mailybaev, From the butterfly effect to spontaneous
  stochasticity in singular shear flows, Communications Physics 3~(1) (2020)
  673--694.

\bibitem{Lohner2014742}
R.~Lohner, D.~Britto, A.~Michailski, E.~Haug, Butterfly-effect for massively
  separated flows, Engineering Computations (Swansea, Wales) 31~(4) (2014)
  742--757.

\bibitem{Wang20131}
Q.~Wang, Forward and adjoint sensitivity computation of chaotic dynamical
  systems, Journal of Computational Physics 235 (2013) 1--13.

\bibitem{wilcox2013}
S.~Kaijima, R.~Bouffanais, K.~Willcox, S.~Naidu, Computational fluid dynamics
  for architectural design, Architectural Design 83~(2) (2013) 118--123.

\bibitem{Lorenz63}
E.~Lorenz, Deterministic nonperiodic flow, Journal of the Atmospheric Sciences
  20 (1963) 130--141.

\bibitem{Wang2014210}
Q.~Wang, R.~Hu, P.~Blonigan, Least squares shadowing sensitivity analysis of
  chaotic limit cycle oscillations, Journal of Computational Physics 267 (2014)
  210--224.

\bibitem{Wang2014156}
Q.~Wang, Convergence of the least squares shadowing method for computing
  derivative of ergodic averages, SIAM Journal on Numerical Analysis 52~(1)
  (2014) 156--170.

\bibitem{Lea2000523}
D.~Lea, M.~Allen, T.~Haine, Sensitivity analysis of the climate of a chaotic
  system, Tellus, Series A: Dynamic Meteorology and Oceanography 52~(5) (2000)
  523--532.

\bibitem{Eyink20041867}
G.~Eyink, T.~Haine, D.~Lea, Ruelle's linear response formula, ensemble adjoint
  schemes and {L}evy flights, Nonlinearity 17~(5) (2004) 1867--1889.

\bibitem{Thuburn200573}
J.~Thuburn, Climate sensitivities via a {F}okker-{P}lanck adjoint approach,
  Quarterly Journal of the Royal Meteorological Society 131~(605) (2005)
  73--92.

\bibitem{Blonigan2014660}
P.~Blonigan, Q.~Wang, Probability density adjoint for sensitivity analysis of
  the mean of chaos, Journal of Computational Physics 270 (2014) 660--686.

\bibitem{Ibragimov2006742}
N.~Ibragimov, Integrating factors, adjoint equations and {L}agrangians, Journal
  of Mathematical Analysis and Applications 318~(2) (2006) 742--757.

\bibitem{NEWIbragimov2011}
N.~Ibragimov, Nonlinear self-adjointness and conservation laws, Journal of
  Physics A: Mathematical and Theoretical 44~(43) (2011) 432002.

\bibitem{Ibragimov2007311}
N.~Ibragimov, A new conservation theorem, Journal of Mathematical Analysis and
  Applications 333~(1 SPEC. ISS.) (2007) 311--328.

\bibitem{S009630031200667420121001}
M.~Gandarias, M.~Bruzon, Conservation laws for a class of quasi self-adjoint
  third order equations., Applied Mathematics and Computation 219~(2) (2012)
  668 -- 678.

\bibitem{edselc.2-52.0-7995823472520110701}
M.~Gandarias, Weak self-adjoint differential equations., Journal of Physics A:
  Mathematical and Theoretical 44~(26) (2011) 262001.

\bibitem{OLDIbragimov2011}
N.~Ibragimov, M.~Torrisi, R.~Tracin, Self-adjointness and conservation laws of
  a generalized {B}urgers equation, Journal of Physics A: Mathematical and
  Theoretical 44~(14) (2011) 145201.

\bibitem{Zhang2013}
Z.-Y. Zhang, Approximate nonlinear self-adjointness and approximate
  conservation laws, Journal of Physics A: Mathematical and Theoretical 46~(15)
  (2013) 155203.

\bibitem{Tracina2015}
R.~Tracina, Nonlinear self-adjointness: A criterion for linearization of
  {PDE}s, Journal of Physics A: Mathematical and Theoretical 48~(6) (2015)
  06FT01.

\bibitem{Strang196437}
G.~Strang, Accurate partial difference methods - ii. non-linear problems,
  Numerische Mathematik 6~(1) (1964) 37--46.

\bibitem{nordstrom2021linear}
J.~Nordstr{\"o}m, A.~R. Winters, Linear and nonlinear analysis of the shallow
  water equations (2021).
\newblock \href {http://arxiv.org/abs/1907.10713} {\path{arXiv:1907.10713}}.

\bibitem{Nordstrom2007874}
J.~Nordstr\"om, K.~Mattsson, C.~Swanson, Boundary conditions for a divergence
  free velocity-pressure formulation of the {N}avier-{S}tokes equations,
  Journal of Computational Physics 225~(1) (2007) 874--890.

\bibitem{Arnold20011749}
D.~Arnold, F.~Brezzi, B.~Cockburn, L.~Donatella~Marini, Unified analysis of
  discontinuous {G}alerkin methods for elliptic problems, SIAM Journal on
  Numerical Analysis 39~(5) (2001) 1749--1779.

\bibitem{nordstrom2009stable}
J.~Nordstr{\"o}m, J.~Gong, E.~Van~der Weide, M.~Sv{\"a}rd, A stable and
  conservative high order multi-block method for the compressible
  {N}avier--{S}tokes equations, Journal of Computational Physics 228~(24)
  (2009) 9020--9035.

\bibitem{svard2007stable}
M.~Sv{\"a}rd, M.~H. Carpenter, J.~Nordstr{\"o}m, A stable high-order finite
  difference scheme for the compressible {N}avier--{S}tokes equations,
  far-field boundary conditions, Journal of Computational Physics 225~(1)
  (2007) 1020--1038.

\bibitem{svard2008stable}
M.~Sv{\"a}rd, J.~Nordstr{\"o}m, A stable high-order finite difference scheme
  for the compressible {N}avier--{S}tokes equations: No-slip wall boundary
  conditions, Journal of Computational Physics 227~(10) (2008) 4805--4824.

\bibitem{nordstrom2012weak}
J.~Nordstr{\"o}m, S.~Eriksson, P.~Eliasson, Weak and strong wall boundary
  procedures and convergence to steady-state of the {N}avier--{S}tokes
  equations, Journal of Computational Physics 231~(14) (2012) 4867--4884.

\bibitem{nordstrom2003finite}
J.~Nordstr{\"o}m, K.~Forsberg, C.~Adamsson, P.~Eliasson, Finite volume methods,
  unstructured meshes and strict stability for hyperbolic problems, Applied
  Numerical Mathematics 45~(4) (2003) 453--473.

\bibitem{carpenter2014entropy}
M.~H. Carpenter, T.~C. Fisher, E.~J. Nielsen, S.~H. Frankel, Entropy stable
  spectral collocation schemes for the {N}avier--{S}tokes equations:
  Discontinuous interfaces, SIAM Journal on Scientific Computing 36~(5) (2014)
  B835--B867.

\bibitem{carpenter1996spectral}
M.~H. Carpenter, D.~Gottlieb, Spectral methods on arbitrary grids, Journal of
  Computational Physics 129~(1) (1996) 74--86.

\bibitem{castonguay2013energy}
P.~Castonguay, D.~M. Williams, P.~E. Vincent, A.~Jameson, Energy stable flux
  reconstruction schemes for advection--diffusion problems, Computer Methods in
  Applied Mechanics and Engineering 267 (2013) 400--417.

\bibitem{huynh2007flux}
H.~T. Huynh, A flux reconstruction approach to high-order schemes including
  discontinuous {G}alerkin methods, in: 18th AIAA Computational Fluid Dynamics
  Conference, 2007, p. 4079.

\bibitem{gassner2013skew}
G.~J. Gassner, A skew-symmetric discontinuous {G}alerkin spectral element
  discretization and its relation to {SBP-SAT} finite difference methods, SIAM
  Journal on Scientific Computing 35~(3) (2013) A1233--A1253.

\bibitem{hesthaven1996stable}
J.~S. Hesthaven, D.~Gottlieb, A stable penalty method for the compressible
  {N}avier--{S}tokes equations: I. {O}pen boundary conditions, SIAM Journal on
  Scientific Computing 17~(3) (1996) 579--612.

\bibitem{kopriva2021}
D.~Kopriva, G.~Gassner, J.~Nordstr{\"o}m, Stability of discontinuous galerkin
  spectral element schemes for wave propagation when the coefficient matrices
  have jumps, Journal of Scientific Computing 88~(1) (2021) 3.

\bibitem{abgrall2020analysis}
R.~Abgrall, J.~Nordstr{\"o}m, P.~{\"O}ffner, S.~Tokareva, Analysis of the
  {SBP-SAT} stabilization for finite element methods part i: linear problems,
  Journal of Scientific Computing 85~(2) (2020) 1--29.

\bibitem{abgrall2021analysis}
R.~Abgrall, J.~Nordstr{\"o}m, P.~{\"O}ffner, S.~Tokareva, Analysis of the
  {SBP-SAT} stabilization for finite element methods part ii: entropy
  stability, Communications on Applied Mathematics and Computation (2021)
  1--23.

\bibitem{svard2014review}
M.~Sv{\"a}rd, J.~Nordstr{\"o}m, Review of summation-by-parts schemes for
  initial--boundary-value problems, Journal of Computational Physics 268 (2014)
  17--38.

\bibitem{fernandez2014review}
D.~C. D.~R. Fern{\'a}ndez, J.~E. Hicken, D.~W. Zingg, Review of
  summation-by-parts operators with simultaneous approximation terms for the
  numerical solution of partial differential equations, Computers \& Fluids 95
  (2014) 171--196.

\bibitem{Abarbanel19811}
S.~Abarbanel, D.~Gottlieb, Optimal time splitting for two- and
  three-dimensional {N}avier-{S}tokes equations with mixed derivatives, Journal
  of Computational Physics 41~(1) (1981) 1--33.

\bibitem{nordstrom2006conservative}
J.~Nordstr{\"o}m, Conservative finite difference formulations, variable
  coefficients, energy estimates and artificial dissipation, Journal of
  Scientific Computing 29~(3) (2006) 375--404.

\bibitem{landau2006}
L.~Landau, E.~Lifshitz, Fluid Mechanics, 2nd ed., Pergamon Press, 2006.

\bibitem{whitham1974}
G.~B. Whitham, Linear and Nonlinear Waves, JWS, 1974.

\bibitem{shallowwaterbook}
C.~B. Vreugdenhil, Numerical Methods for Shallow-Water Flow, Vol.~13, Springer,
  2013.

\bibitem{tomas}
T.~Lundquist, {Private Communication} (2021).

\end{thebibliography}

\end{document}